%
%
%
%
%
\documentclass[smallextended]{svjour3}       
\smartqed  
\usepackage{graphicx}
%
%
%
%
%
\begin{document}

\title{Computing multiple zeros by using a parameter in Newton-Secant method
}

\titlerunning{Computing multiple zeros by using a parameter in Newton-Secant method}        

\author{Massimiliano Ferrara \and Somayeh Sharifi \and Mehdi
Salimi}


\institute{M. Ferrara \at
              Department of Law and Economics, University Mediterranea of
Reggio Calabria, Italy\\
                            \email{massimiliano.ferrara@unirc.it}           
           \and
           S. Sharifi \at
              Young Researchers and Elite Club, Hamedan Branch, Islamic
Azad University, Hamedan, Iran\\
 \email{s.sharifi@iauh.ac.ir}
               \and
           M. Salimi \at
           Center for Dynamics, Department of Mathematics, Technische
Universit{\"a}t Dresden, Germany\\
              MEDAlics, Research Centre at
the University Dante Alighieri, Reggio Calabria, Italy\\
\\
 \email{mehdi.salimi@tu-dresden.de\\
 mehdi.salimi@medalics.org}}

\date{Received: date / Accepted: date}

\maketitle
\begin{abstract}
\noindent In this paper, we modify the Newton-Secant method with
third order of convergence for finding multiple roots of nonlinear
equations. Per iteration this method requires two evaluations of
the function and one evaluation of its first derivative. This
method has the efficiency index equal to $3^{\frac{1}{3}}\approx
1.44225$. We describe the analysis of the proposed method along
with numerical experiments including comparison with existing
methods. Moreover, the dynamics of the proposed method are shown
with some comparisons to the other existing methods.
\keywords{Multi-point iterative methods \and Newton-Secant method
\and multiple roots \and Basin of attraction.}
\end{abstract}

\section{Introduction}
\label{intro} Solving nonlinear equations based on iterative
methods is a basic and extremely valuable tool in all fields in
the science as in economics, engineering and physics. The
important appearance related to these methods are order of
convergence and number of function evaluations. Therefore, it is
favorable to attain as high as possible convergence order with
fixed number of function evaluations each iteration. The aim of
this paper is to modify third order Newton-Secant method for
solving nonlinear equations for multiple zeros with same order of
convergence and efficiency index. In addition, the efficiency
index of an iterative method of order $p$ requiring $k$ function
evaluations per iteration is defined by $E(k,p) = \sqrt[k]{p}$,
see~\cite{Ostrowski}.

Let $\alpha$ is multi roots of $f(x)=0$ with multiplicity $m$ and
$x\neq \alpha$, then we can write $f(x)=(x-\alpha)^mq(x)$, where
$\lim_{x\rightarrow \alpha}q(x)\neq 0$. The function $f\in
C^m[a,b]$ has a zero of multiplicity $m$ at $\alpha$ in $]a,b[$ if
and only if
$0=f(\alpha)=f'(\alpha)=f''(\alpha)=\cdots=f^{(m-1)}(\alpha)$, but
$f^{(m)}(\alpha)\neq 0$. Newton method for finding a simple zero
\emph{$\alpha$} have been modified by Scheroder to find multiple
zero of a non-linear equation which is of the form
$x_{n+1}=x_n-m\frac{f(x_n)}{f'(x_n)}$ converges quadratically
\cite{scheroder}.

Moreover, there have been many attempts to construct methods for
finding multiple roots e.g.\ the work of Chun et al. \cite{chun},
Dong \cite{Dong}, Hansen and Patrick \cite{Halley}, Osada
\cite{Osada}, Victory and Neta \cite{victor} are proposed various
iterative methods for finding multiple zeros $\alpha$ of a
nonlinear equation $f(x)=0$ where multiplicity $m$ is known.

This paper is organized as follows: Section \ref{sec:2} is devoted
to the construction and convergence analysis of a new method with
convergence order three. Computational aspects, comparisons and
dynamic behavior with other methods are illustrated in Section
\ref{sec:3}. Finally, a conclusion is provided in Section
\ref{sec:4}.
\section{Description of the method and convergence analysis} \label{sec:2} In this section, we propose
a new modification of Newton-Secant's method to find multiple
zeros. Newton-Secant's method is

\begin{equation}
  \begin{array}{lrl}\label{c1}
 y_{n}=x_{n}-\frac{f(x_{n})}{f'(x_{n})},&\\
  [1ex]
x_{n+1}=x_n-\frac{f(x_n)}{f(x_n)-f(y_n)}\frac{f(x_n)}{f'(x_n)},\quad(n=0,1,\ldots),
      \end{array}
\end{equation}
that order of convergence is three for simple roots. We aim at
extending the method (\ref{c1}) for multiple roots and build a
method according to (\ref{c1}) without any additional evaluations
of the function or its derivatives by using of the parameter. In
other words, convergence order of Newton-Secant method for
approximating simple zero of nonlinear equations is three, while
convergence order of this method is linear for finding multiple
zeros. Therefore, we use a parameter $\theta$ for
solving this problem in the second term.\\
We have

\begin{equation}
  \begin{array}{lrl}\label{ccc2}
 y_{n}=x_{n}-\frac{f(x_{n})}{f'(x_{n})},&\\
  [1ex]
x_{n+1}=x_n-\frac{\theta f(x_n)}{\theta
f(x_n)-f(y_n)}\frac{f(x_n)}{f'(x_n)}.
      \end{array}
\end{equation}
The order of convergence of the preceding method will be analyzed
and the method will be adjusted accordingly to prove the following
expected theorem.
\begin{theorem}\label{cc}
Let $\alpha \in D$ be a multiple zero of a sufficiently
differentiable function $f:D \subset \mathbf{R} \rightarrow
\mathbf{R}$ for an open interval $D$ with the multiplicity $m$,
which includes $x_0$ as an initial approximation of $\alpha$.
Then, the method (\ref{ccc2}) has order three and
$\theta=\left(\frac{-1+m}{m}\right)^{-1+m}.$
\end{theorem}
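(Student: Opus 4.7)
The plan is to carry out a standard Taylor expansion analysis of the error around the multiple zero $\alpha$. Set $e_n = x_n - \alpha$ and use the multiplicity structure to write $f(x) = q(\alpha)(x-\alpha)^m[1 + c_1(x-\alpha) + c_2(x-\alpha)^2 + \cdots]$ with $c_k = q^{(k)}(\alpha)/(k!\,q(\alpha))$. Termwise differentiation yields an expansion
\[
\frac{f(x_n)}{f'(x_n)} = \frac{e_n}{m} + \kappa_1 e_n^2 + \kappa_2 e_n^3 + O(e_n^4),
\]
with explicit coefficients $\kappa_1,\kappa_2$ depending on $m$ and the $c_k$. Substituting into the first line of (\ref{ccc2}) gives
\[
y_n - \alpha = \frac{m-1}{m}\, e_n - \kappa_1 e_n^2 - \kappa_2 e_n^3 + O(e_n^4),
\]
and a further Taylor expansion of $f$ around $\alpha$, evaluated at $y_n$, produces $f(y_n)$ as a power series in $e_n$ with leading term $q(\alpha)\bigl(\tfrac{m-1}{m}\bigr)^m e_n^m$.

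Next I form the ratio $R_n := \theta f(x_n)/[\theta f(x_n) - f(y_n)]$. Cancelling the common prefactor $q(\alpha) e_n^m$ between numerator and denominator gives
\[
R_n = \frac{\theta}{\theta - \bigl(\frac{m-1}{m}\bigr)^m} + R_1 e_n + R_2 e_n^2 + O(e_n^3),
\]
with computable coefficients $R_1,R_2$ that depend linearly, respectively quadratically, on the $c_k$. Inserting this and the expansion of $f(x_n)/f'(x_n)$ into $e_{n+1} = e_n - R_n\cdot f(x_n)/f'(x_n)$ and demanding that the linear-in-$e_n$ contribution vanish forces
\[
\frac{\theta}{m\bigl(\theta - (\tfrac{m-1}{m})^m\bigr)} = 1,
\]
which rearranges to $\theta(m-1) = m\bigl(\tfrac{m-1}{m}\bigr)^m$ and hence $\theta = \bigl(\tfrac{m-1}{m}\bigr)^{m-1}$, exactly the value asserted in the theorem.

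The remaining and most delicate step is to push the expansion one order further and verify that, with this $\theta$, the $e_n^2$ coefficient of $e_{n+1}$ also vanishes, so that $e_{n+1} = C e_n^3 + O(e_n^4)$ for some constant $C = C(m,c_1,c_2)$. I expect this to be the main obstacle, since a priori a one-parameter adjustment is only guaranteed to annihilate the first-order error term; one must check that the $\theta$ fixed by the leading balance also makes the quadratic term cancel. Concretely, after imposing $\theta = \bigl(\tfrac{m-1}{m}\bigr)^{m-1}$ the leading value of $R_n$ equals $m$, and the second-order vanishing reduces to the identity $R_1 = -m^2 \kappa_1$, which has to be verified by direct computation of $R_1$ from the ratio and of $\kappa_1$ from $f(x_n)/f'(x_n)$. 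Once this cancellation is established, the cubic order of convergence and the explicit asymptotic error constant both follow.
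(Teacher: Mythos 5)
Your plan is exactly the route the paper takes: Taylor-expand $f(x_n)$, $f'(x_n)$, $f(y_n)$ about $\alpha$, write $e_{n+1}=D_1e_n+D_2e_n^2+D_3e_n^3+O(e_n^4)$, and fix $\theta$ by demanding $D_1=D_2=0$. Your first-order condition $\theta/\bigl[m\bigl(\theta-(\tfrac{m-1}{m})^m\bigr)\bigr]=1$ is the correct one (in fact more reliable than the paper's printed $D_1$ in (\ref{a6}), which contains a typo: as written, $D_1=0$ has no solution in $\theta$; the $\theta$ in the denominator should carry a factor $m$), and it yields $\theta=(\tfrac{m-1}{m})^{m-1}$ as you derive.

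The one step you leave open is the decisive one: that this same $\theta$ also annihilates the $e_n^2$ coefficient. The paper settles it by computing $D_2$ explicitly, see (\ref{a7}), whose numerator factor $-m(m-1)^m+\theta m^m(m-1)$ vanishes precisely at $\theta=(\tfrac{m-1}{m})^{m-1}$. Your reduction of the question to the identity $R_1=-m^2\kappa_1$ is correct, and the identity does hold; here is the short verification that closes your gap. Put $a_1=c_1/c_0$, $P=(\tfrac{m-1}{m})^m$, and note that the chosen $\theta$ satisfies $\theta-P=\theta/m$ and $P/\theta=\tfrac{m-1}{m}$. Since $y_n-\alpha=e_n-f(x_n)/f'(x_n)=\tfrac{m-1}{m}e_n\bigl(1+\tfrac{a_1}{m(m-1)}e_n+O(e_n^2)\bigr)$ — mind the sign here: the quadratic term of $f/f'$ enters with a plus after the subtraction, a sign the paper's (\ref{a4}) misprints, though its $D_2$, $D_3$ are consistent with the correct sign — one gets $f(y_n)=c_0Pe_n^m\bigl(1+se_n+O(e_n^2)\bigr)$ with $s=a_1\tfrac{m^2-m+1}{m(m-1)}$, and then
\[
R_1 \;=\; m\Bigl[a_1-\bigl(ma_1-(m-1)s\bigr)\Bigr] \;=\; m(m-1)(s-a_1) \;=\; m(m-1)\cdot\frac{a_1}{m(m-1)} \;=\; a_1 \;=\; -m^2\kappa_1,
\]
since $\kappa_1=-c_1/(m^2c_0)$. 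Hence the $e_n^2$ term cancels and $e_{n+1}=\frac{mc_1^2-2(m-1)c_0c_2}{2m^2c_0^2}\,e_n^3+O(e_n^4)$, exactly the paper's error equation. With this computation supplied, your proposal is a complete and correct proof, identical in method to the paper's; as written, it is a correct skeleton whose load-bearing cancellation was asserted rather than checked.
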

\begin{proof}
Let $e_{n}:=x_{n}-\alpha$, $e_{n,y}:=y_{n}-\alpha$,
$c_{i}:=\frac{m!}{(m+i)!}\frac{f^{(m+i)}(\alpha)}{f^{(m)}(\alpha)}$.
Using the fact that $f(\alpha)=0$, Taylor expansion of $f$ at
$\alpha$ yields
\begin{equation}\label{a1}
f(x_{n}) =e_n^m\left(c_{0} +c_1e_n+
c_{2}e_{n}^{2}+c_{3}e_{n}^{3}\right)+O(e_n^{4}),
\end{equation}
and
\begin{equation}\label{a2}
f^{'}(x_{n})
=e_n^{m-1}(m+(m+1)c_1e_n+(m+2)c_2e_n^2+(m+3)c_3e_n^3+O(e_n^4)).
\end{equation}
Therefore
\begin{equation}\label{a3}
\frac{f(x_{n})}{f'(x_{n})}=\frac{1}{m}e_n-\frac{c_1}{m^2c_0}e_n^2+\frac{-(1+m)c_1^2+2mc_0c_2}{m^3c_0^2}e_n^3+O(e_n^{4}),
\end{equation}
and hence
\begin{equation}\label{a4}
e_{n,y}=y_{n}-\alpha=
\frac{-1+m}{m}e_n-\frac{c_1}{m^2c_0}e_n^2+\frac{-(1+m)c_1^2+2mc_0c_2}{m^3c_0^2}e_n^3+O(e_n^{4}).
\end{equation}
For $f(y_n)$ we also have
\begin{equation}\label{a5}
f(y_{n}) =e_{n,y}^m\left(c_{0} +c_1e_{n,y}+
c_{2}e_{n,y}^{2}+c_{3}e_{n,y}^{3}\right)+O(e_{n,y}^{4}).
\end{equation}
Substituting (\ref{a1})-(\ref{a5}) in (\ref{c2}), we obtain
\begin{equation}
e_{n+1}= D_1e_n+D_2e_n^2+D_3e_n^3+O(e_n^4),
\end{equation}
where
\begin{equation}\label{a6}
D_1=1+\frac{\theta}{-\theta+\left(\frac{-1+m}{m}\right)^mm},
\end{equation}
and
\begin{equation}\label{a7}
D_2=\frac{\theta m^{-2+m}(-m(-1+m)^m+\theta
m^m(-1+m))c_1}{(-1+m)((-1+m)^m-\theta m^m)^2c_0},
\end{equation}
and
\begin{equation}\label{a8}
D_3=\frac{\theta m^{-3+m}A}{2(-1+m)^2((-1+m)^m-\theta
m^m)^3c_0^2},
\end{equation}
where
\begin{equation}\label{a9}
\begin{array}{lrl}
A=(-1+m)^{2m}(-1+m+2m^2)(mc_1^2-2(-1+m)c_0c_2)
\\[1ex]
+2\theta^2(-1+m)^2m^{2m}((1+m)c_1^2-2mc_0c_2)-\theta(-1+m)^{1+m} &
\\[1ex]
(m(3+4m)c_1^2+2(1+m-4m^2)c_0c_2).
\end{array}
\end{equation}
Therefor, to provide the three order of convergence, it is
necessary to choose $D_i=0 \quad (i=1,2)$, so we have
\begin{equation}\label{a8}
\theta=\left(\frac{-1+m}{m}\right)^{-1+m},
\end{equation}
and the error equation becomes
\begin{equation}
e_{n+1}=\left(\frac{mc_1^2-2(-1+m)c_0c_2}{2m^2c_0^2}\right)e_n^3+O(e_n^4),
\end{equation}
and method (\ref{ccc2}) has convergence order three, which proves
the theorem.
\end{proof}

\section{Numerical performance and dynamic behavior}\label{sec:3}
\subsection{Numerical results }
In this section we apply the new method (\ref{ccc2}) to several
benchmark examples and compare them with existing methods that have the same order of convergence.\\
\textbf {The new method } is given by
\begin{equation}
  \begin{array}{lrl}\label{c2}
 y_{n}=x_{n}-\frac{f(x_{n})}{f'(x_{n})},&\\
  [1ex]
x_{n+1}=x_n-\frac{(-1+m)^{-1+m} f(x_n)}{(-1+m)^{-1+m}
f(x_n)-m^{-1+m}f(y_n)}\frac{f(x_n)}{f'(x_n)}.
      \end{array}
\end{equation}
\textbf {The Osada's method }\cite{Osada}, is given by
\begin{equation}\label{m1}
x_{n+1}=x_n-\frac{1}{2}m(m+1)\frac{f(x_n)}{f'(x_n)}+\frac{1}{2}(m-1)^2\frac{f'(x_n)}{f''(x_n)}.
\end{equation}
\textbf {The Dong's method} \cite{Dong}, is given by
\begin{equation}
  \begin{array}{lrl}\label{m2}
 y_n=x_n+\sqrt{m}\frac{f(x_n)}{f'(x_n)},&\\
  [1ex]
x_{n+1}=y_n-m(1-\frac{1}{\sqrt{m}})^{1-m}\frac{f(y_n)}{f'(x_n)}.
  \end{array}
\end{equation}
\textbf {The Chun's method }\cite{chun}, is given by
\begin{equation}
  \begin{array}{lrl}\label{m3}
 x_{n+1}=x_n-\frac{m((2\gamma-1)m+3-2\gamma)}{2}\frac{f(x_n)}{f'(x_n)}&\\
  [1ex]
~~~~~~+\frac{\gamma(m-1)^2}{2}\frac{f'(x_n)}{f''(x_n)}-\frac{(1-\gamma)m^2}{2}\frac{f(x_n)^2f''(x_n)}{f'(x_n)^3}.
  \end{array}
\end{equation}
In the numerical experiments of this paper we will use
$\gamma=-1$.\\
We have tested the method (\ref{c2}) on a number of nonlinear
equations. To obtain a high accuracy and avoid the loss of
significant digits, we employed multi-precision arithmetic with
100 significant decimal digits in the programming package of
Mathematica 8 \cite{Hazrat}.\\
In order to test our proposed method (\ref{c2}) also compare it
with the methods (\ref{m1}), (\ref{m2}) and (\ref{m3}) we compute
the error, the computational order of convergence (COC) by the
approximate formula \cite{coc}
\begin{equation}
\textup{COC}\approx\frac{\ln|(x_{n+1}-\alpha)/(x_{n}-\alpha)|}{\ln|(x_{n}-\alpha)/(x_{n-1}-\alpha)|}.
\end{equation}
And the approximated computational order of convergence, (ACOC) by
the formula \cite{acoc}
\[
\textup{ACOC}\approx\frac{\ln|(x_{n+1}-x_{n})/(x_{n}-x_{n-1})|}{\ln|(x_{n}-x_{n-1})/(x_{n-1}-x_{n-2})|}.
\]\\
\begin{table}[h]
\begin{center}
\vspace*{-3ex} \begin{tabular}{l c c}
  \hline
    Test function $f_n$ & root $\alpha$
  \\ \hline
    $f_1(x) = (\sin^2x+x)^5$ & $0$
  \\[0.6ex]
   $f_2(x) = (\ln x+\sqrt{x}-5)^3$ & $8.3094326942315717953469556827$
  \\[0.6ex]
   $ f_3(x)=(\mathrm{e}^{x^2+7x-30}-1)^6$ & $ 3 $
  \\[0.6ex]
 $ f_4(x)=(\sqrt{x}-\frac{1}{x}-3)^7 $& $ 9.6335955628326951924063127092
 $\\
  \hline
\end{tabular}
\end{center}
\vspace*{-3ex}\caption{Test functions $f_1, \dots, f_4$ and root
$\alpha$.\label{table1}}
\end{table}
\begin{table}[!ht]
\begin{center}
\begin{tabular}{c l l l l}
\hline
 ~~ & Method $(\ref{c2})$ & Method $(\ref{m1})$ & Method $(\ref{m2})$ & Method $(\ref{m3})$\\
\hline
$f_1$, $x_0=0.1$\\
$\vert x_{1}-x^{*} \vert$ & $0.739\mathrm{e}{-}3$ & $0.162\mathrm{e}{-}2$ & $0.955\mathrm{e}{-}3$ & $0.112\mathrm{e}{-}2$  \\
$\vert x_{2}-x^{*} \vert$ & $0.364\mathrm{e}{-}9$& $0.106\mathrm{e}{-}7$ & $0.111\mathrm{e}{-}8$ & $0.215\mathrm{e}{-}8$  \\
$\vert x_{3}-x^{*} \vert$ & $0.434\mathrm{e}{-}28$ & $0.304\mathrm{e}{-}23$ & $0.175\mathrm{e}{-}26$ & $0.149\mathrm{e}{-}25$ \\
COC & $3.0000$ & $3.0000$ & $3.0000$ & $3.0000$\\
ACOC & $2.9999$ & $2.9994$ & $2.9998$ & $2.9998$\\
\\
$f_2$, $x_0=8$\\
$\vert x_{1}-x^{*} \vert$ & $0.166\mathrm{e}{-}4$ & $0.719\mathrm{e}{-}4$ & $0.277\mathrm{e}{-}4$ & $0.376\mathrm{e}{-}4$  \\
$\vert x_{2}-x^{*} \vert$ & $0.246\mathrm{e}{-}17$& $0.876\mathrm{e}{-}15$ & $0.189\mathrm{e}{-}16$ & $0.667\mathrm{e}{-}16$  \\
$\vert x_{3}-x^{*} \vert$ & $0.793\mathrm{e}{-}39$ & $0.793\mathrm{e}{-}39$ & $0.793\mathrm{e}{-}39$ & $0.793\mathrm{e}{-}39$ \\
COC & $3.0000$ & $3.0000$ & $3.0000$ & $3.0000$\\
ACOC & $3.0000$ & $3.0000$ & $3.0000$ & $3.0000$\\
\\
$f_3$, $x_0=3.1$\\
$\vert x_{1}-x^{*} \vert$ & $0.128\mathrm{e}{-}1$ & $0.268\mathrm{e}{-}1$ & $0.212\mathrm{e}{-}1$ & $0.216\mathrm{e}{-}1$  \\
$\vert x_{2}-x^{*} \vert$ & $0.325\mathrm{e}{-}4$& $0.105\mathrm{e}{-}2$ & $0.333\mathrm{e}{-}3$ & $0.362\mathrm{e}{-}3$  \\
$\vert x_{3}-x^{*} \vert$ & $0.531\mathrm{e}{-}12$ & $0.871\mathrm{e}{-}7$ & $0.143\mathrm{e}{-}86$ & $0.190\mathrm{e}{-}8$ \\
COC & $3.0000$ & $2.9964$ & $2.9993$ & $2.9993$\\
ACOC & $3.0004$ & $2.9428$ & $2.9857$ & $2.9858$\\
\\
$f_4$, $x_0=9$\\
$\vert x_{1}-x^{*} \vert$ & $0.223\mathrm{e}{-}3$ & $0.792\mathrm{e}{-}4$ & $0.122\mathrm{e}{-}4$ & $0.987\mathrm{e}{-}4$  \\
$\vert x_{2}-x^{*} \vert$ & $0.864\mathrm{e}{-}14$& $0.139\mathrm{e}{-}15$ & $0.742\mathrm{e}{-}19$ & $0.340\mathrm{e}{-}15$  \\
$\vert x_{3}-x^{*} \vert$ & $0.918\mathrm{e}{-}39$ & $0.918\mathrm{e}{-}39$ & $0.918\mathrm{e}{-}39$ & $0.918\mathrm{e}{-}39$ \\
COC & $3.0000$ & $3.0000$ & $3.0000$ & $3.0000$\\
ACOC & $3.0000$ & $3.0000$ & $3.0000$ & $3.0000$\\
\hline
\end{tabular}
\end{center}
\vspace*{-3ex} \caption{Errors, COC and ACOC for methods
(\ref{c2}), (\ref{m1}), (\ref{m2}) and (\ref{m3}). \label{table2}}
\end{table}
\\
In Table \ref{table2} our new method (\ref{c2}) is compared with
the methods (\ref{m1}),(\ref{m2}) and (\ref{m3}) on four nonlinear
equations which were presented in Table \ref{table1}.
\subsection{Dynamic behavior }
We already observed that all methods converge if the initial guess
is chosen suitably. We now investigate the stability region. In
other words, we numerically approximate the domain of attraction
of the zeros as a qualitative measure of stability. To answer the
important question on the dynamical behavior of the algorithms, we
investigate the dynamics of the new method and compare them with
common and well-performing methods from the literature. For more
details one can consult \cite{Lotfi1,Scott,Sharifi,Varona}.

Let $G:\mathbf{C} \to \mathbf{C} $ be a rational map on the
complex plane. For $z\in \mathbf{C} $, we define its orbit as the
set $orb(z)=\{z,\,G(z),\,G^2(z),\dots\}$. A point $z_0 \in
\mathbf{C} $ is called periodic point with minimal period $m$ if
$G^m(z_0)=z_0$, where $m$ is the smallest integer with this
property. A periodic point with minimal period $1$ is called fixed
point. Moreover, a point $z_0$ is called attracting if
$|G'(z_0)|<1$, repelling if $|G^{\prime}(z_0)|>1$, and neutral
otherwise. The Julia set of a nonlinear map $G(z)$, denoted by
$J(G)$, is the closure of the set of its repelling periodic
points. The complement of $J(G)$ is the Fatou set $F(G)$, where
the basin of attraction of the different roots lie \cite{Babajee}.

For the dynamical point of view, in fact, we take a $512 \times
512$ grid of the square $[-3,3]\times[-3,3]\in \mathbf{C}$ and
assign a color to each point $z_0\in D$ according to the root to
which the corresponding orbit of the iterative method starting
from $z_0$ converges, and we mark the point as black if the orbit
does not converge to a root, in the sense that after at most 100
iterations it has a distance to any of the roots, which is larger
than $10^{-3}$. In this way, we distinguish the attraction basins
by their color for different methods.

We have tested several different examples, and the results on the
performance of the tested methods were similar. Therefore, we
report the general observation here for following test problems
which are presenter in Table \ref{table3}.

\begin{table}[!ht]
\begin{center}
\begin{tabular}{l c}
\hline Test problem  & roots\\
\hline $p_1(z)=(z^3-1)^{10}$&$1,~ -0.5 \pm 0.866025 i$ \\
$p_2(z)=(z^5-z^2+1)^{15}$& $-0.808731,~-0.464912 \pm 1.07147 i,~0.869278\pm0.388269 i$\\
$p_2(z)=(2z^4-z)^8$&$0,~-0.39685 \pm 0.687365 i,~0.793701$ \\
 \hline
\end{tabular}
\end{center}
\vspace*{-3ex} \caption{Test problems $p_1, p_2, p_3$ and their
roots \label{table3}}
\end{table}

\begin{figure}[ht!]
\begin{minipage}[b]{0.22\linewidth}
\centering
\includegraphics[width=\textwidth]{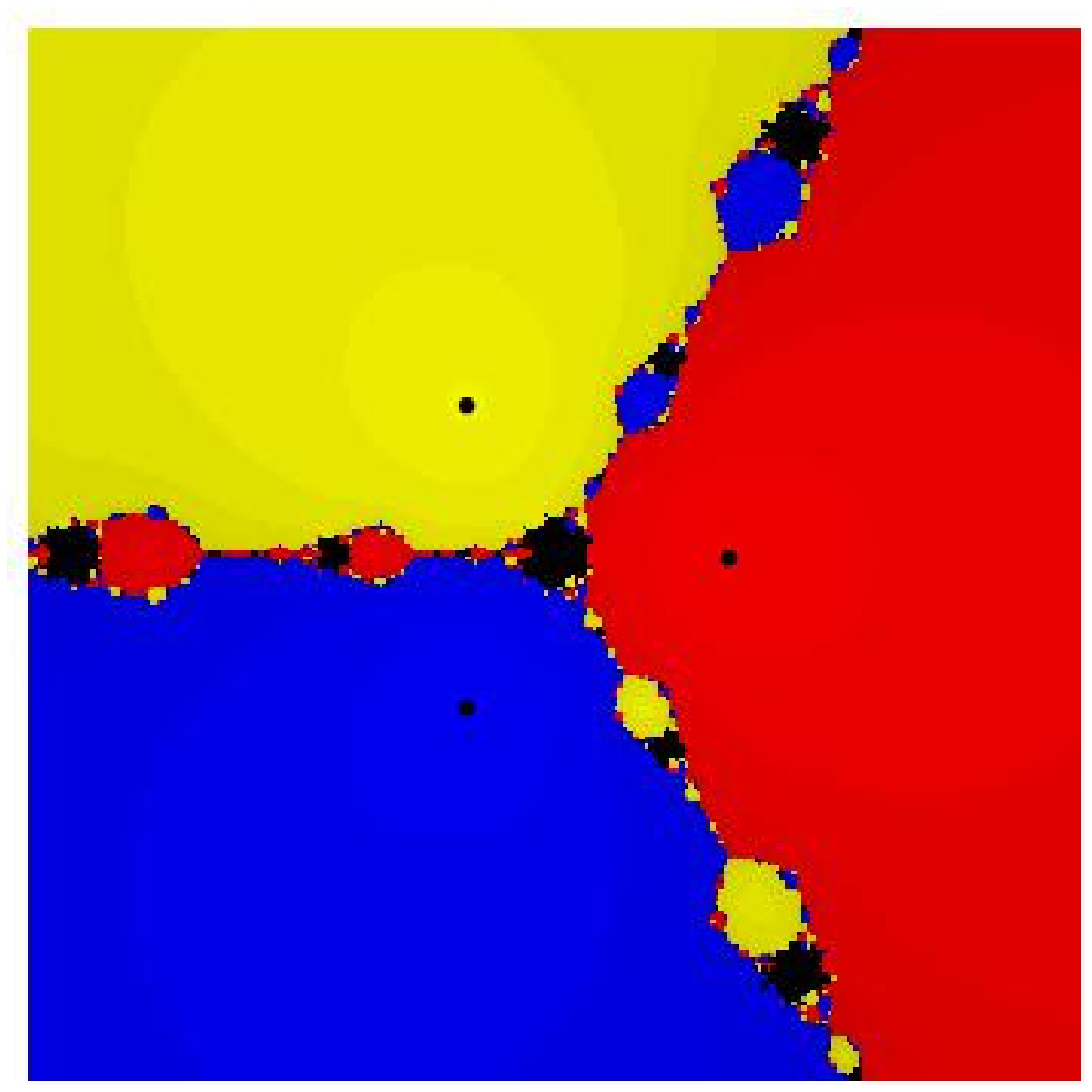}
\end{minipage}
\hspace{0.3cm}
\begin{minipage}[b]{0.22\linewidth}
\centering
\includegraphics[width=\textwidth]{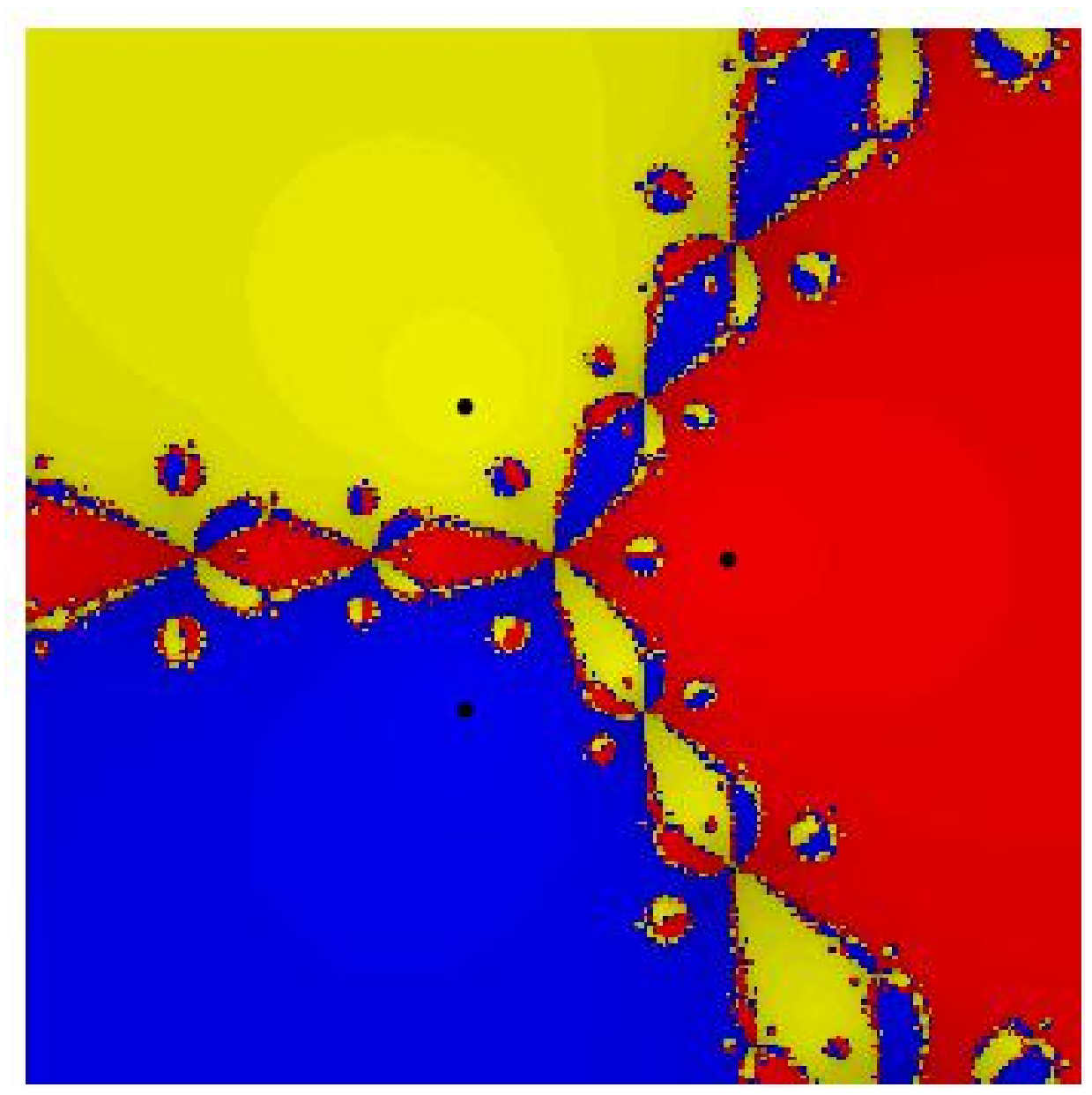}
\end{minipage}
\hspace{0.3cm}
\begin{minipage}[b]{0.22\linewidth}
\centering
\includegraphics[width=\textwidth]{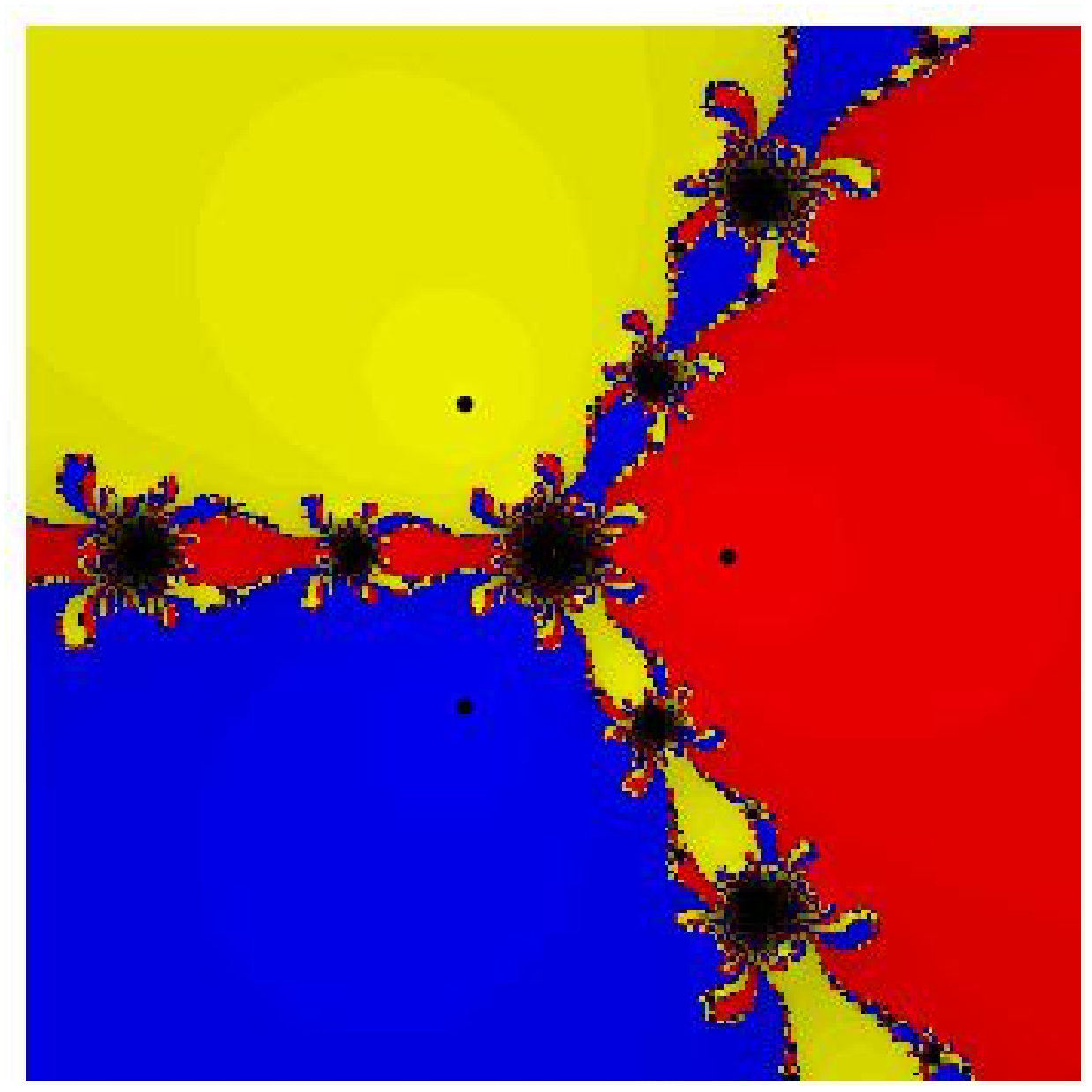}
\end{minipage}
\hspace{0.3cm}
\begin{minipage}[b]{0.22\linewidth}
\centering
\includegraphics[width=\textwidth]{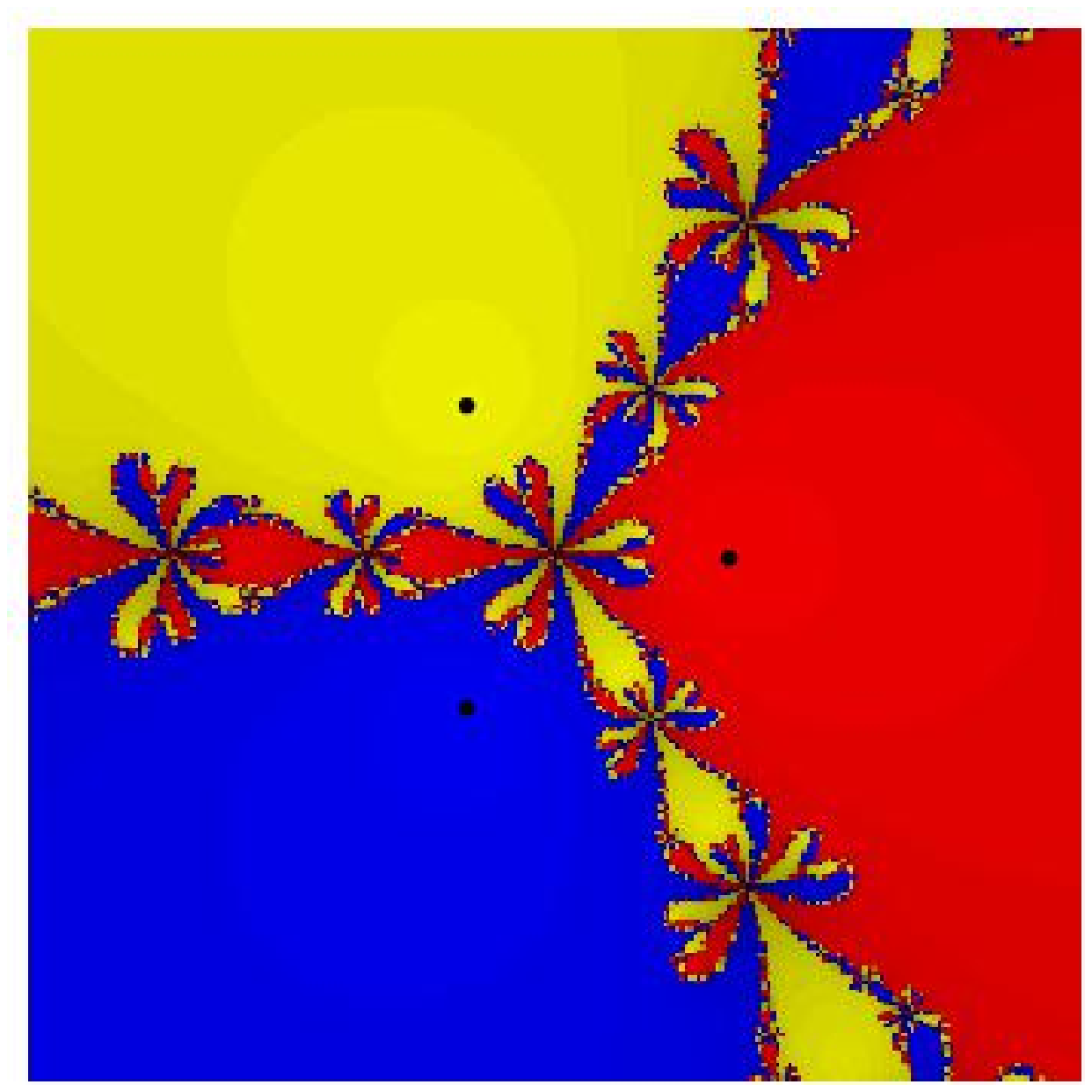}
\end{minipage}
\caption{Comparison of basin of attraction of methods (\ref{c2}),
(\ref{m1}), (\ref{m2}) and (\ref{m3})  for test problem
$p_1(z)=(z^3-1)^{10}$} \label{fig:figure1}
\end{figure}
\begin{figure}[ht!]
\begin{minipage}[b]{0.22\linewidth}
\centering
\includegraphics[width=\textwidth]{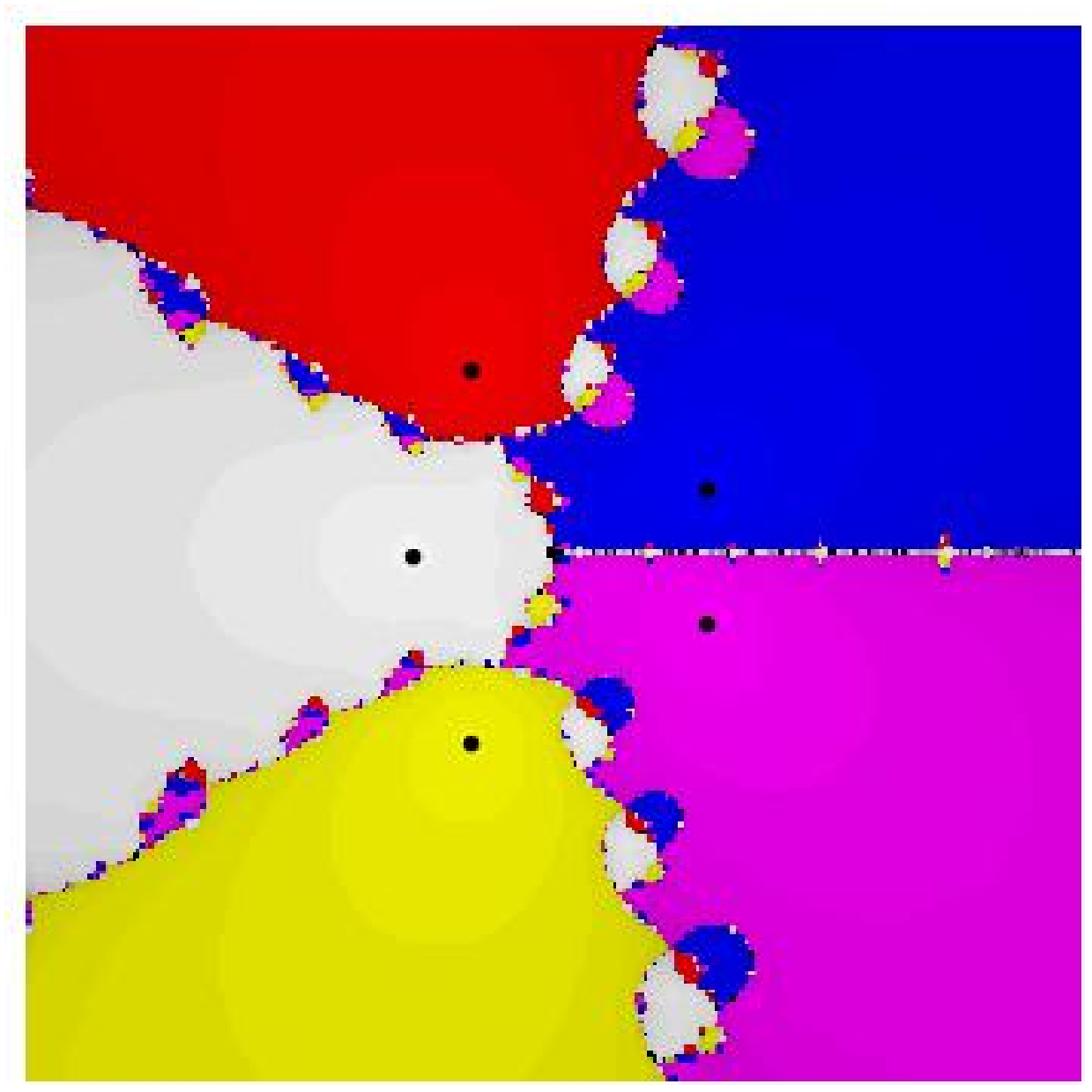}
\end{minipage}
\hspace{0.3cm}
\begin{minipage}[b]{0.22\linewidth}
\centering
\includegraphics[width=\textwidth]{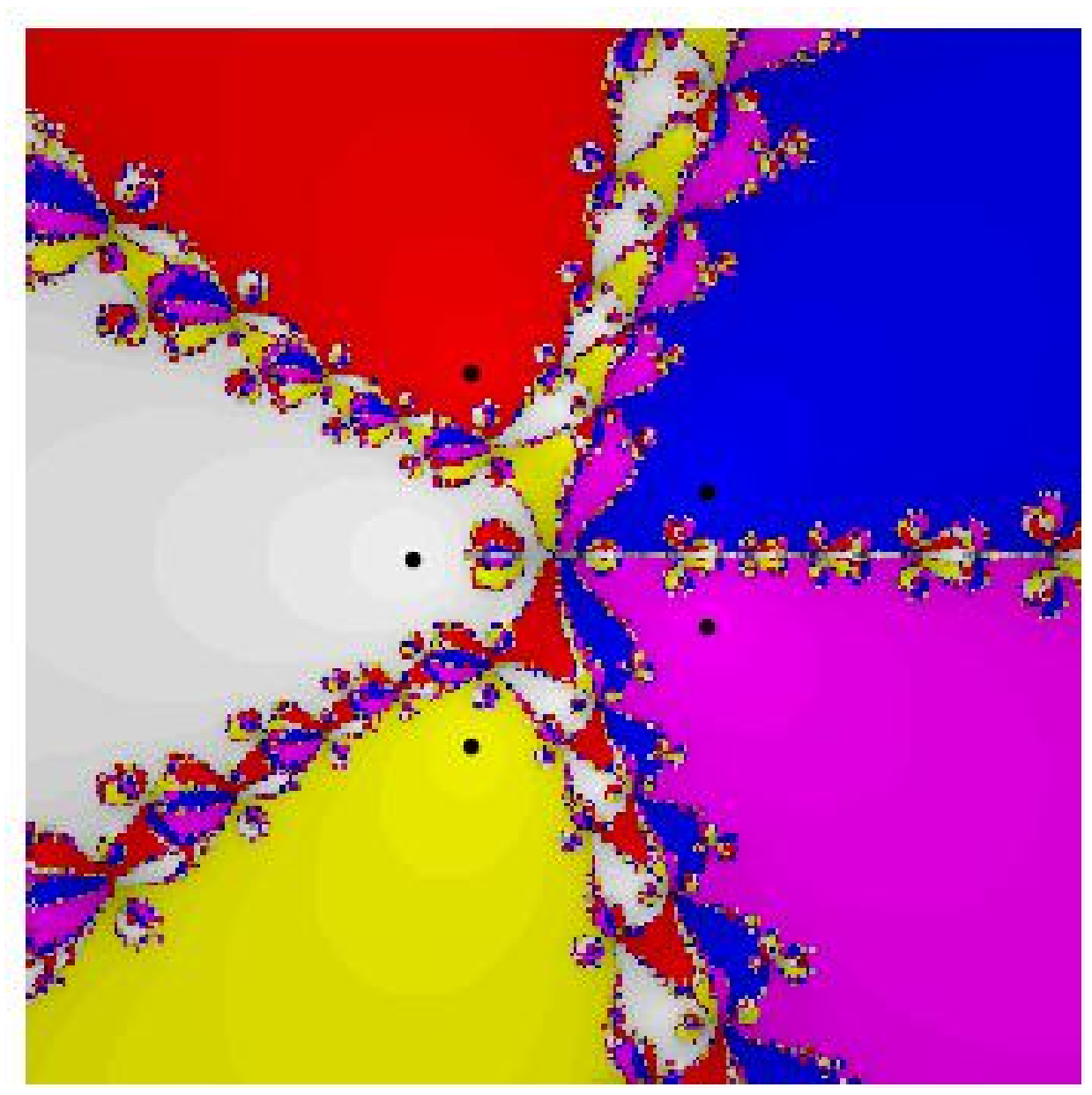}
\end{minipage}
\hspace{0.3cm}
\begin{minipage}[b]{0.22\linewidth}
\centering
\includegraphics[width=\textwidth]{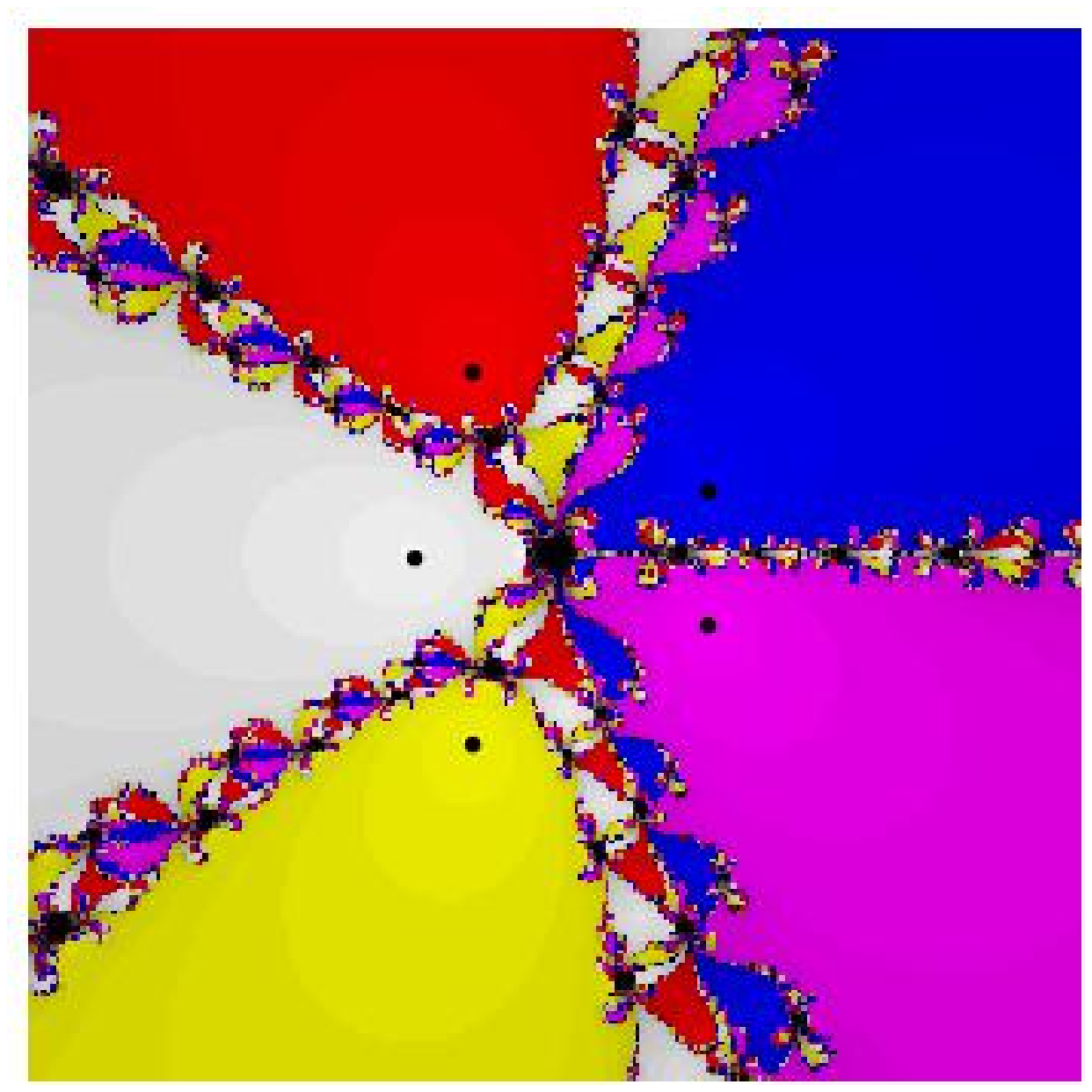}
\end{minipage}
\hspace{0.3cm}
\begin{minipage}[b]{0.22\linewidth}
\centering
\includegraphics[width=\textwidth]{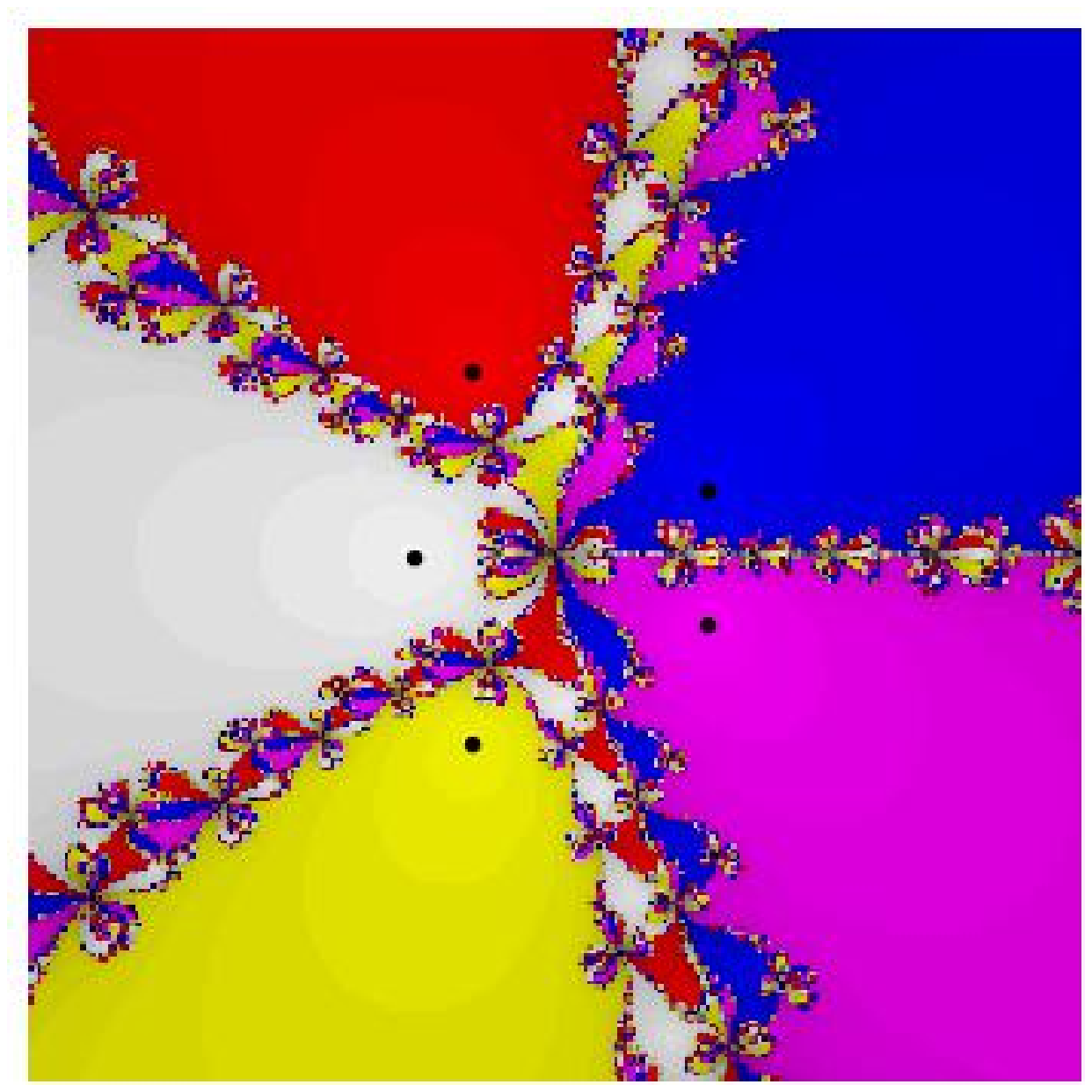}
\end{minipage}
\caption{Comparison of basin of attraction of methods (\ref{c2}),
(\ref{m1}), (\ref{m2}) and (\ref{m3}) for test problem
$p_2(z)=(z^5-z^2+1)^{15}$} \label{fig:figure2}
\end{figure}
\begin{figure}[ht!]
\begin{minipage}[b]{0.22\linewidth}
\centering
\includegraphics[width=\textwidth]{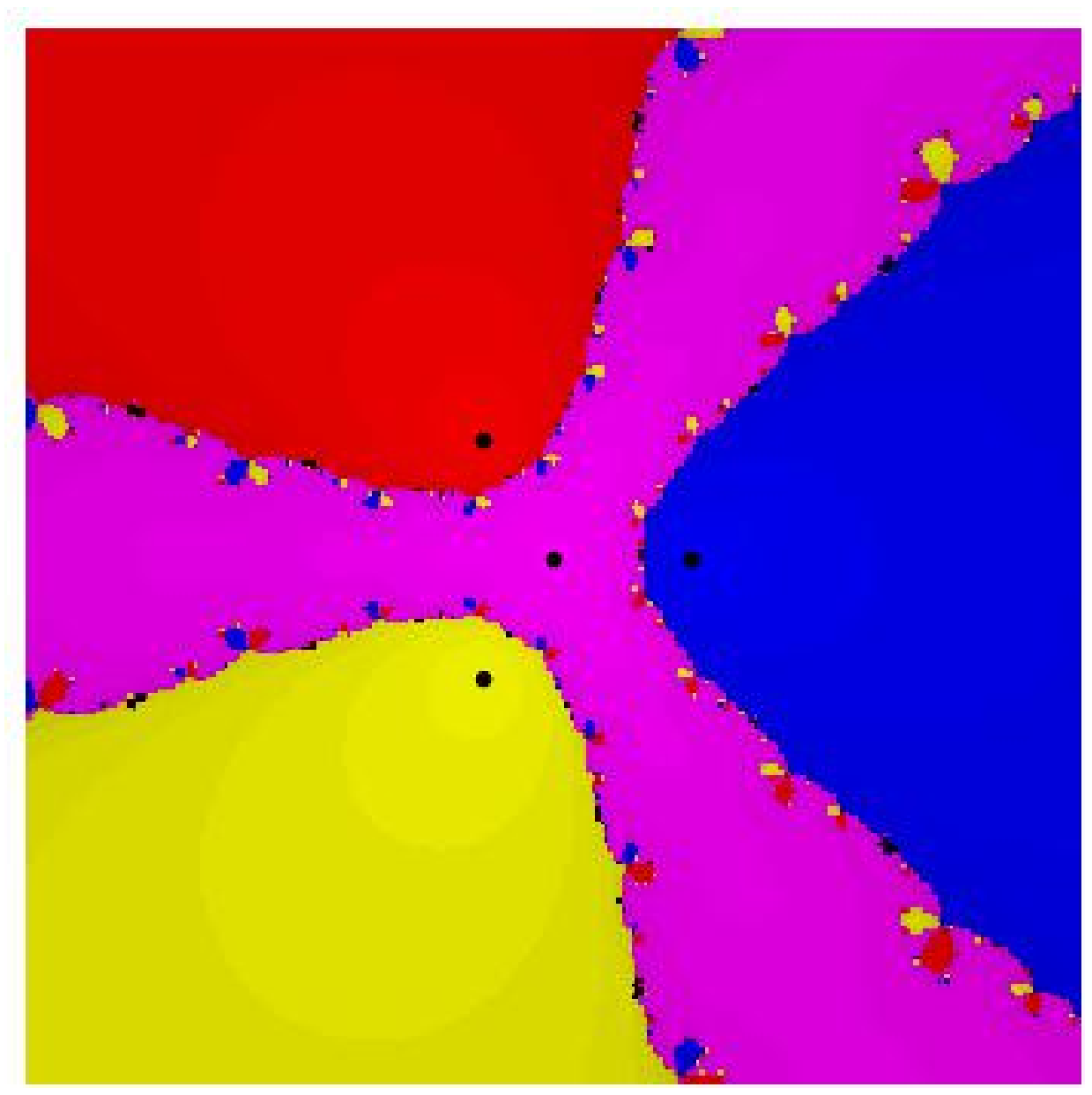}
\end{minipage}
\hspace{0.3cm}
\begin{minipage}[b]{0.22\linewidth}
\centering
\includegraphics[width=\textwidth]{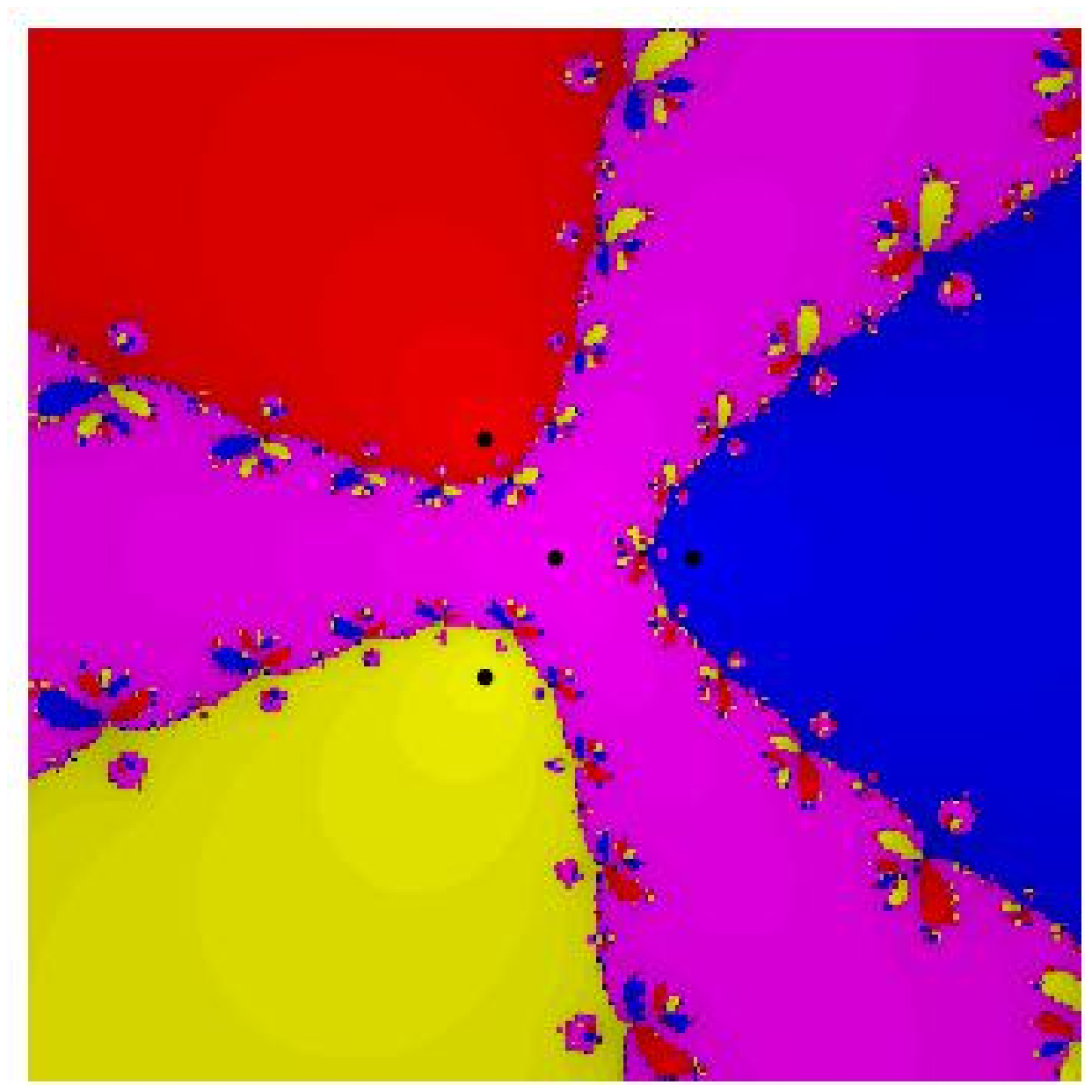}
\end{minipage}
\hspace{0.3cm}
\begin{minipage}[b]{0.22\linewidth}
\centering
\includegraphics[width=\textwidth]{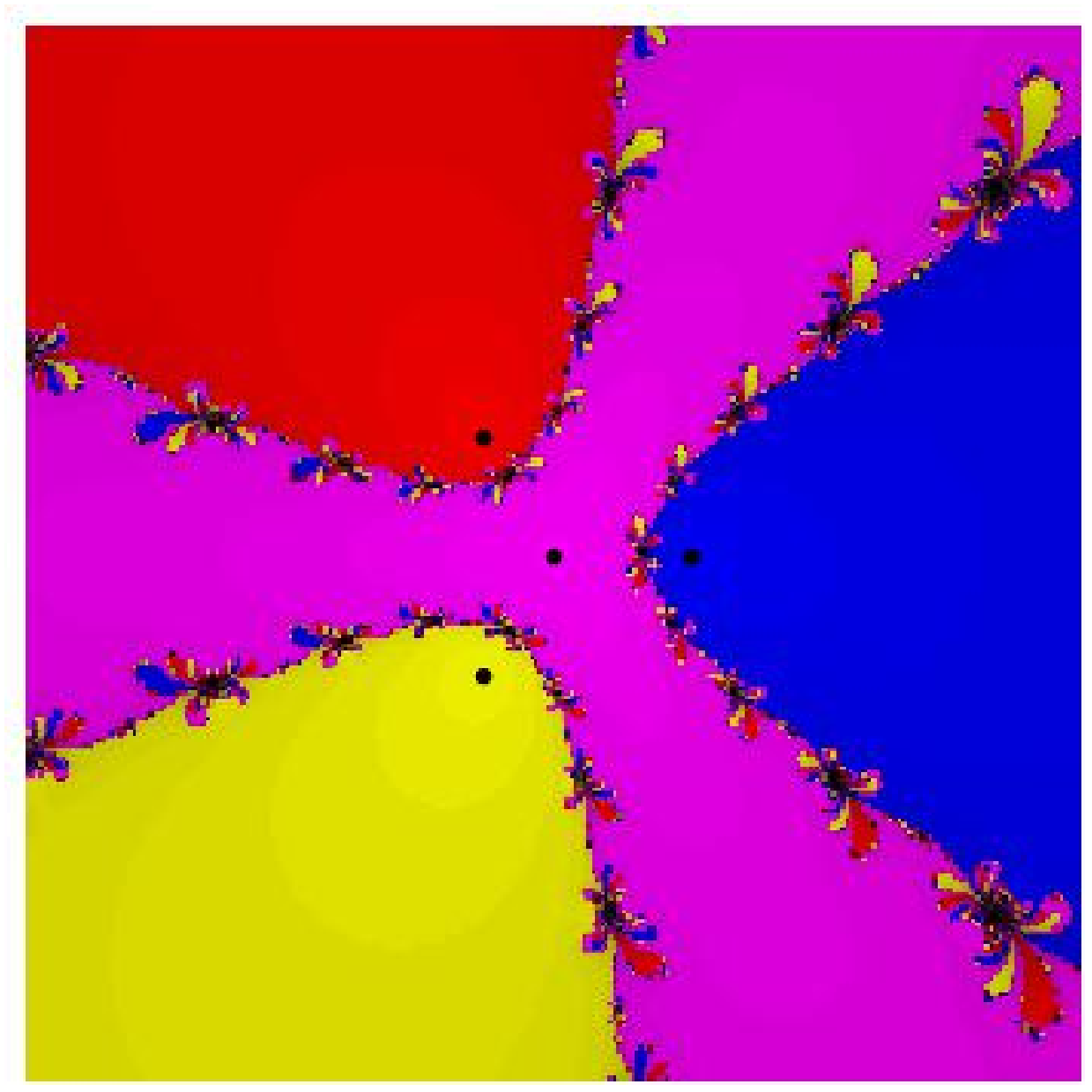}
\end{minipage}
\hspace{0.3cm}
\begin{minipage}[b]{0.22\linewidth}
\centering
\includegraphics[width=\textwidth]{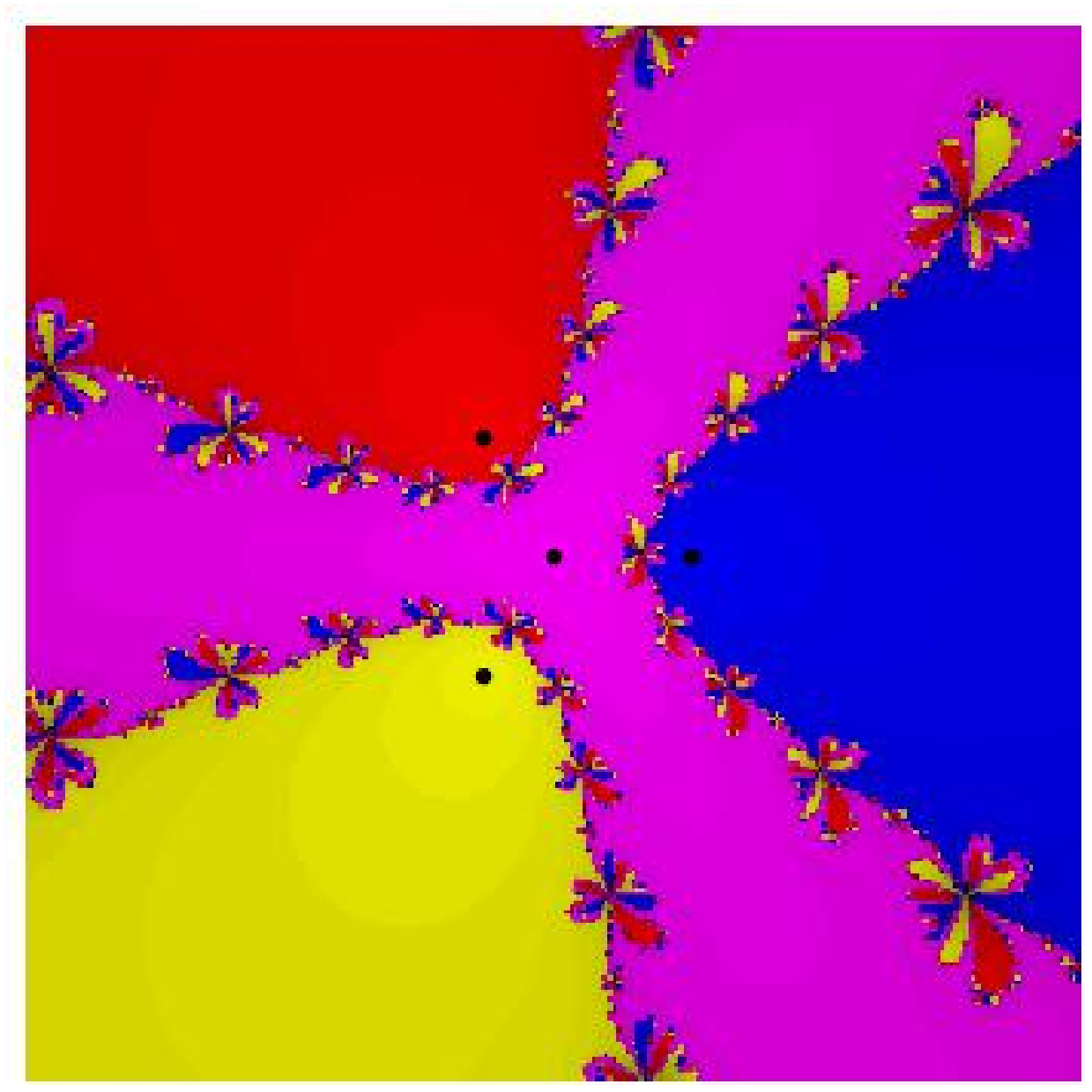}
\end{minipage}
\caption{Comparison of basin of attraction of methods (\ref{c2}),
(\ref{m1}), (\ref{m2}) and (\ref{m3})  for test problem
$p_3(z)=(2z^4-z)^8$} \label{fig:figure3}
\end{figure}
\newpage
In Figures \ref{fig:figure1}, \ref{fig:figure2} and
\ref{fig:figure3}, basins of attractions of methods (\ref{c2}),
(\ref{m1}), (\ref{m2}) and (\ref{m3}) with three test problems are
illustrated from left to right respectively. As a result, in
Figures \ref{fig:figure1}-\ref{fig:figure3} basins of attraction
of method (\ref{c2}) seem to produce larger basins of attraction
than other methods.
\section{Conclusion}\label{sec:4}
In this paper, Newton-Secant's method for simple zeros was
modified for finding multiple zeros of non-linear equations with
same order of convergence and without any additional evaluations
of the function or its derivatives. A numerical comparison with
other methods shows that our new method is a valuable alternative
to existing methods. In addition, a numerical investigation of the
basins of attraction of the solutions illustrated that the
stability region of our method it typically larger than that of
other methods.

\end{document}